\newcommand{\BH}{{\mathbb {H}}}
\newcommand{\Ad}{{\mathrm{Ad}}}
\newcommand{\End}{{\mathrm{End}}}
\newcommand{\GL}{{\mathrm{GL}}}
\newcommand{\Hom}{{\mathrm{Hom}}}
\newcommand{\Sp}{{\mathrm{Sp}}}
\newcommand{\tr}{{\mathrm{tr}}}
\newcommand{\vsp}{{\vspace{0.2in}}}
\newcommand{\oO}{\operatorname{O}}
\newcommand{\oU}{\operatorname{U}}
\renewcommand{\sl}{\mathfrak s \mathfrak l}
\newcommand{\C}{\mathbb{C}}
\newcommand{\R}{\mathbb R}
\newcommand{\bH}{\mathbb H}
\newcommand{\ii}{\mathbf{i}}
\newcommand{\jj}{\mathbf{j}}
\newcommand{\la}{\langle}
\newcommand{\ra}{\rangle}
\newcommand{\be}{\begin {equation}}
\newcommand{\ee}{\end {equation}}
\newcommand{\bee}{\begin {equation*}}
\newcommand{\eee}{\end {equation*}}
\theoremstyle{Theorem}
\theoremstyle{Theorem}
\theoremstyle{Theorem}
\newtheorem{lem}{Lemma}[section]
\theoremstyle{Theorem}
\newtheorem{prp}{Proposition}[section]
\newtheorem{thmp}[prp]{Theorem}
\newtheorem{prpp}[prp]{Proposition}
\theoremstyle{Plain}
\theoremstyle{Definition}
\begin{document}

\title[MVW-extensions]{MVW-extensions of real quaternionic classical groups}

\author{Yanan Lin}

\author{Binyong Sun}

\author{Shaobin Tan}

\address{Department of Mathematics, Xiamen University, Xiamen, Fujian, 361005, China}
\email{ynlin@xmu.edu.cn}

\address{Academy of Mathematics and Systems Science, Chinese Academy of
Sciences, Beijing, 100190, China} \email{sun@math.ac.cn}

\address{Department of Mathematics, Xiamen University, Xiamen, Fujian,  361005, China}
\email{tans@xmu.edu.cn}

\subjclass[2000]{20G20}
\keywords{Classical groups, quaternion algebra, MVW-extensions}

%\thanks{Supported by NSFC Grant 10931006}

\begin{abstract}
Let $G$ be a real quaternionic classical group $\GL_n(\bH)$, $\Sp(p,q)$ or $\oO^*(2n)$. We define an extension $\breve G$ of $G$ with the following property: it contains $G$ as a subgroup of index two, and for every $x\in G$, there is an element $\breve g\in \breve G\setminus G$ such that $\breve g x\breve{g}^{-1}=x^{-1}$. This is similar to Moeglin-Vigneras-Waldspurger's extensions of non-quaternionic classical groups.
\end{abstract}

 \maketitle

%\tableofcontents

\section{Introduction}

We follow the terminology of \cite{SZ}. So by a commutative involutive algebra, we mean a commutative semisimple finite dimensional $\R$-algebra, equipped with a $\R$-algebra involution on it. A commutative
involutive algebra is said to be simple if it is nonzero, and has no involution stable ideal
except for $\{0\}$ and itself. Every commutative involutive algebra is uniquely a product of simple ones, and every simple one is isomorphic to one of the followings:
\begin{equation}\label{simplea}
 (\R, 1_\R),\,\, (\C, 1_\C),\,\, (\C, \bar{\phantom{a}}_\C ),\,\, (\R\times \R, \tau_\R),\,\, (\C\times \C, \tau_\C),
 \end{equation}
where $1_\R$ and $1_\C$ are the identity maps, and $\tau_\R$ and $\tau_\C$ are the maps which interchange the coordinates. In this paper, we use ``$\bar{\phantom{a}}$" (or $\bar{\phantom{a}}_\C$ and $\bar{\phantom{a}}_\bH$) to indicate complex conjugations and quaternion conjugations.

Let $\epsilon=\pm 1$. Let $(A,\tau)$ be a  commutative involutive algebra. Let $E$ be a finitely generated $A$-module. Recall that an $(\epsilon,\tau)$-hermitian form on $E$ is a non-degenerate $\R$-bilinear map
\[
  \la\,,\,\ra_E: E\times E\rightarrow A
\]
satisfying
\begin{equation}\label{herme1}
 \la au, v\ra_E=a\la u,v\ra_E, \quad \la u, v\ra_E=\epsilon \la v,u\ra_E^\tau,\quad a\in A, \,u,v\in E.
\end{equation}
Equipped with such a form, we call $E$ an $(\epsilon,\tau)$-hermitian $A$-module. Then we denote by $\oU(E)$ the group of $A$-module automorphisms of $E$ preserving the form $\la\,,\,\ra_E$. If $(A,\tau)$ is simple, then $E$ is free as an $A$-module, and $\oU(E)$ is a non-quaternionic classical group as in the following table.
\begin{table}[h]
\caption{}
\centering % used for centering table
\begin{tabular}{c c c c c c c} % centered columns (4 columns)
\hline%inserts double horizontal lines
$(A,\tau)$ & \vline & $(\R, 1_\R)$ & $(\C, 1_\C)$ & $(\C, \bar{\phantom{a}}_\C)$ & $(\R\times \R, \tau_\R)$ & $(\C\times \C, \tau_\C)$\\
\hline %inserts single line
$\epsilon=1$ &  \vline & $\oO(p,q)$ & $\oO_n(\C)$ & $\oU(p,q)$ & $\GL_n(\R)$ & $\GL_n(\C)$  \\
\hline
$\epsilon=-1$ &  \vline & $\Sp_{2n}(\R)$ & $\Sp_{2n}(\C)$ & $\oU(p,q)$ & $\GL_n(\R)$ & $\GL_n(\C)$  \\
\hline
\end{tabular}
\label{table:nonlin1} % is used to refer this table in the text
\end{table}

\vsp

As in \cite{SZ}, we define the MVW-extension $\breve{\oU}(E)$ (named after Moeglin, Vigneras and Waldspurger) of $\oU(E)$ to be the subgroup of
$\GL(E_\R)\times \{\pm 1\}$ consisting of pairs $(g,\delta)$ such
that either
\[
  \delta=1 \quad\textrm{and } \la gu,gv\ra_E=\la u,v\ra_E,\quad  u,v\in  E,
\]
or
\[
  \delta=-1 \quad\textrm{and }\la gu,gv\ra_E=\la v,u\ra_E,\quad  u,v\in E.
\]
Here $E_\R:=E$, viewed as a $\R$-vector space. Every $g\in
\GL(E_\R)$ is automatically $A$-linear if $(g,1)\in
\breve{\oU}(E)$, and is conjugate $A$-linear (with respect to
$\tau$) if $(g,-1)\in \breve{\oU}(E)$.

The MVW-extension $\breve{\oU}(E)$ has the following remarkable property (\cite[Proposition
4.I.2]{MVW87}):  it contains $\oU(E)$ as a subgroup of index two, and for every $x\in \oU(E)$, there is an element $\breve g\in \breve \oU(E)\setminus \oU(E)$ such that $\breve g x\breve{g}^{-1}=x^{-1}$. By character theory and Harish-Chandra's regularity theorem, this has the following interesting consequence in representation theory:  for every element $\breve g\in \breve \oU(E)\setminus \oU(E)$, and for every irreducible Casselman-Wallach representation $\pi$ of $\oU(E)$, its twist $\pi^{\breve g}$ is isomorphic to its contragredient $\pi^\vee$. Here the twist $\pi^{\breve g}$ is the representation $g\mapsto \pi(\breve g g {\breve g}^{-1})$ of $\oU(E)$ on the same space as that of $\pi$. The reader is referred to \cite{Cass} and \cite[Chapter 11]{W2} for details on Casselman-Wallach representations.

\vsp
This paper is aimed to define  similar extensions for real quaternionic classical groups. Denote by $\bH$ a fixed real quaternion algebra. Up to isomorphism, this is the unique central simple division algebra over $\R$ of dimension $4$. Now let $E$ be a finitely generated  $A$-$\bH$-bimodule. Every $A$-$\bH$-bimodule is assumed to satisfy that
\[
  tu=ut, \quad t\in \R, u\in E.
\]
Here the first ``$t$" is viewed as an element of $A$, and the second one is viewed as an element of $\bH$. An $(\epsilon,\tau)$-hermitian form $\la\,,\,\ra_E$ on $E$ is said to be quaternionic if  it further satisfies that \begin{equation}\label{herme2}
  \la u h,v\ra_E=\la u, v \bar{h}\ra_E, \quad  h\in \bH, u,v\in E.
\end{equation}
We equip on $E$ with such a form and then call $E$ a quaternionic $(\epsilon,\tau)$-hermitian $A$-module.

Quaternion $(\epsilon,\tau)$-hermitian $A$-modules are classified as follows.

\begin{prp}\label{classify}
Assume that $(A,\tau)$ is simple. Then every quaternionic $(\epsilon,\tau)$-hermitian $A$-module is isomorphic to exactly one in the following table. Here $p,q,n$ are nonnegative integers, and the quaternionic $(\epsilon,\tau)$-hermitian forms on the spaces in the table are given explicitly in Section \ref{classification}.
\begin{table}[h]
\caption{}
\centering % used for centering table
\begin{tabular}{c c c c c c c} % centered columns (4 columns)
\hline%inserts double horizontal lines
$(A,\tau)$ & \vline & $(\R, 1_\R)$ & $(\C, 1_\C)$ & $(\C, \bar{\phantom{a}}_\C)$ & $(\R\times \R, \tau_\R)$ & $(\C\times \C, \tau_\C)$\\
\hline %inserts single line
$\epsilon=1$ &  \vline & $\BH^{p+q}$ & $\C^{2n}\otimes_\C \BH$ & $\C^{p+q}\otimes_\C \BH$ & $\BH^n\oplus\BH^n$ & $\BH^n\oplus \BH^n$  \\
\hline
$\epsilon=-1$ & \vline & $\BH^n$ & $\C^{n}\otimes_\C \BH$ & $\C^{p+q}\otimes_\C \BH$ & $\BH^n\oplus\BH^n$ & $\BH^n\oplus \BH^n$ \\
\hline
\end{tabular}
\label{table:nonlin2} % is used to refer this table in the text
\end{table}

\end{prp}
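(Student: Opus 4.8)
The plan is to treat the five simple commutative involutive algebras $(A,\tau)$ one at a time, in each case first identifying $E$ as an $A$-$\bH$-bimodule and then classifying, up to isometry, the quaternionic $(\epsilon,\tau)$-hermitian forms it can carry. Since the left $A$-action and the right $\bH$-action on $E$ commute and $\bH\cong\bH^{\mathrm{op}}$ via quaternion conjugation, $E$ is a left module over the finite-dimensional semisimple $\R$-algebra $D:=A\otimes_\R\bH$, which carries the involution $\iota:=\tau\otimes\bar{\phantom{a}}_{\bH}$; one computes $D\cong\bH$ for $(\R,1_\R)$, $D\cong\Mat_2(\C)$ for each of $(\C,1_\C)$ and $(\C,\bar{\phantom{a}}_{\C})$ --- with $\iota$ of symplectic type in the former and of the second kind in the latter --- $D\cong\bH\times\bH$ for $(\R\times\R,\tau_\R)$, and $D\cong\Mat_2(\C)\times\Mat_2(\C)$ for $(\C\times\C,\tau_\C)$, with $\iota$ interchanging the two simple factors in the last two cases. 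Thus $E$ is determined up to bimodule isomorphism by the multiplicities with which the (one or two) simple $D$-modules occur in it.

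Next I would reinterpret the form. Relation \eqref{herme2} says precisely that $\langle\,,\,\rangle_E$ is conjugate-linear in its second argument for the $\bH$-action with respect to quaternion conjugation, so, combined with \eqref{herme1}, it becomes a form which is $D$-linear in the first variable and $\iota$-conjugate-linear in the second. Using a standard transfer (trace-form) argument built from the reduced trace of $\bH$, one then replaces $\langle\,,\,\rangle_E$ by an equivalent nondegenerate $D$-valued form $h_E$, and a short computation shows $h_E$ is $\epsilon$-hermitian for $\iota$; hence a quaternionic $(\epsilon,\tau)$-hermitian $A$-module is the same datum as an $\epsilon$-hermitian space over $(D,\iota)$. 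Now I would invoke the classical classification of the latter. Over $(\bH,\bar{\phantom{a}}_{\bH})$: for $\epsilon=1$, rank together with signature $(p,q)$, yielding the model $\bH^{p+q}$; for $\epsilon=-1$, a unique form of each rank $n$, yielding $\bH^n$. Over $(\Mat_2(\C),\iota)$ one uses Morita equivalence to descend to the centre $\C$: in the $(\C,1_\C)$ case $\iota$ is of symplectic type, so an $\epsilon$-hermitian $\Mat_2(\C)$-form corresponds to a $(-\epsilon)$-symmetric $\C$-bilinear form, giving an alternating form of even rank $2n$ (model $\C^{2n}\otimes_\C\bH$) when $\epsilon=1$ and a symmetric form of rank $n$ (model $\C^n\otimes_\C\bH$) when $\epsilon=-1$; in the $(\C,\bar{\phantom{a}}_{\C})$ case $\iota$ is of the second kind and Morita descent produces $\C$-hermitian forms, classified by signature $(p,q)$, giving $\C^{p+q}\otimes_\C\bH$ for either sign of $\epsilon$. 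Finally, when $\iota$ swaps the two simple factors of $D$, the form $h_E$ merely pairs one factor of $E$ with the $\iota$-conjugate dual of the other, so the two multiplicities must agree, to some $n$, no further invariant survives, and the model is $\bH^n\oplus\bH^n$ for either sign of $\epsilon$. Matching these outcomes with Proposition \ref{classify} and with the explicit forms recorded in Section \ref{classification} completes the existence half.

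For the ``exactly one'' assertion I would note that the invariants appearing above --- rank throughout, and signature for hermitian forms over $\bH$ or over $\C$ --- are complete: by Witt cancellation (over $\bH$ this is Jacobson's classification of hermitian forms), two $\epsilon$-hermitian spaces over $(D,\iota)$ are isometric exactly when these invariants coincide, which through the dictionary of the previous paragraph means the tabulated modules are pairwise non-isomorphic and exhaust all possibilities.

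I expect the one genuinely delicate point to be the transfer step: setting up cleanly the equivalence between quaternionic $(\epsilon,\tau)$-hermitian $A$-modules and $\epsilon$-hermitian spaces over $(D,\iota)$, and, within it, keeping exact track of the sign in the Morita descent over $\Mat_2(\C)$. It is precisely the symplectic type of quaternion conjugation that interchanges the orthogonal and symplectic pictures over $\C$, and hence is responsible for the rank-parity constraint separating $\C^{2n}\otimes_\C\bH$ from $\C^n\otimes_\C\bH$; getting this bookkeeping right is the real content, the classification inputs themselves being entirely standard. A more pedestrian route would bypass $D$ altogether and diagonalise each form directly in each of the five cases, but the same sign and parity subtleties would resurface there as the choice of which normal form to adopt.
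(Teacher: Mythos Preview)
Your proposal is correct and lands on the same classification inputs as the paper, but the organization differs enough to be worth noting. The paper also goes case by case through the five simple $(A,\tau)$, but it never introduces the enveloping algebra $D=A\otimes_\R\bH$ with its involution $\iota$. For $(\R,1_\R)$ it does exactly your trace transfer, producing an $\bH$-valued $(\epsilon,\bar{\phantom{a}}_\bH)$-hermitian form and then quoting the classical classification. For the two complex cases it bypasses the intermediate $D$-valued form and performs Morita descent directly and concretely: it fixes an explicit quaternionic form $\la\,,\,\ra_{\bH,\mp 1}$ on the $\C$-$\bH$-bimodule $\bH$, sets $F:=\Hom_{\C,\bH}(\bH,E)$, equips $F$ with the form $\la\varphi,\psi\ra_F:=\psi^\tau\circ\varphi$, and checks that $E\cong F\otimes_\C\bH$ with $\la\,,\,\ra_E=\la\,,\,\ra_F\otimes\la\,,\,\ra_{\bH,\mp 1}$; the sign flip $\epsilon\leadsto -\epsilon$ in the $(\C,1_\C)$ case then comes visibly from the fact that $\la\,,\,\ra_{\bH,-1}$ is skew, rather than from an abstract statement about the symplectic type of $\iota$. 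For the two product algebras it uses the idempotent decomposition $E=e_1E\oplus e_2E$ just as you do. Your $D$-framework has the virtue of uniformity and makes transparent why the same Morita machinery governs both complex cases with different sign behavior; the paper's hands-on constructions avoid having to set up the general $A$-valued-to-$D$-valued transfer (the step you yourself flag as delicate) and make the explicit normal forms in Section~\ref{classification} drop out immediately.
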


Denote by $\oU(E)$ the group of $A$-$\bH$-bimodule automorphisms of $E$ preserving the form $\la\,,\,\ra_E$. If $(A,\tau)$ is simple and $E$ is as in Table \ref{table:nonlin2}, then $\oU(E)$ is respectively as in the following table.
\begin{table}[h]
\caption{}
\centering % used for centering table
\begin{tabular}{c c c c c c c} % centered columns (4 columns)
\hline%inserts double horizontal lines
$(A,\tau)$ & \vline & $(\R, 1_\R)$ & $(\C, 1_\C)$ & $(\C, \bar{\phantom{a}}_\C)$ & $(\R\times \R, \tau_\R)$ & $(\C\times \C, \tau_\C)$\\
%heading
\hline
$\epsilon=1$ & \vline & $\Sp(p,q)$ & $\Sp_{2n}(\C)$ & $\oU(p,q)$ & $\GL_n(\BH)$ & $\GL_n(\C)$ \\
\hline
$\epsilon=-1$ & \vline & $\oO^*(2n)$ & $\oO_{n}(\C)$ & $\oU(p,q)$ & $\GL_n(\BH)$ & $\GL_n(\C)$ \\
\hline
\end{tabular}
\label{table:nonlin3} % is used to refer this table in the text
\end{table}

In general, we define the MVW-extension of $\oU(E)$ to be
\[
  \breve{\oU}(E):=\breve{\oU}(E_A)\cap (\GL(E_\bH)\times \{\pm 1\}),
\]
where $E_A$ is the underlying $(\epsilon,\tau)$-hermitian $A$-module of $E$ (forgetting the $\bH$-module structure), and $E_\bH:=E$, viewed as a right $\bH$-vector space.

The main result of this paper is
\begin{thmp}\label{geo} Let $E$ be a quaternionic $(\epsilon,\tau)$-hermitian $A$-module. Then $\breve{\oU}(E)$ contains $\oU(E)$ as a subgroup of index two, and for every $x\in \oU(E)$, there is an element $\breve
g\in \breve{\oU}(E)\setminus \oU(E)$ such that
$\breve{g}x\breve{g}^{-1}=x^{-1}$.
\end{thmp}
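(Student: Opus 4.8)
The plan is to reduce to the case where $(A,\tau)$ is simple and then to argue case by case through Table~\ref{table:nonlin3}, using the explicit quaternionic hermitian forms recorded in Section~\ref{classification}. For the reduction: a commutative involutive algebra is a finite product of $\tau$-stable simple ones (the pairs of factors that $\tau$ interchanges already being absorbed into a single simple $(A,\tau)$), and this splits $E$, the form $\la\,,\,\ra_E$, the right $\bH$-action, $\oU(E)$ and $\breve{\oU}(E)$ into the corresponding fibre products over $\{\pm1\}$, so it suffices to treat one simple factor at a time. One also observes that both assertions follow from a single statement: since $\breve{\oU}(E)\subseteq\GL(E_\bH)\times\{\pm1\}$ and the kernel of the sign projection $\breve{\oU}(E)\to\{\pm1\}$ is exactly $\oU(E)$, it is enough to show that for every $x\in\oU(E)$ there is $\breve g=(g,-1)\in\breve{\oU}(E)$ with $gxg^{-1}=x^{-1}$; taking $x=1$ then produces an element of $\breve{\oU}(E)\setminus\oU(E)$, whence index two, while general $x$ gives the conjugacy. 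Thus one must exhibit, for each $x$, a conjugate-$A$-linear, right-$\bH$-linear automorphism $g$ of $E$ with $\la gu,gv\ra_E=\la v,u\ra_E$ and $gxg^{-1}=x^{-1}$.

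To build such a $g$, first note that in each model of Proposition~\ref{classify} there is a canonical conjugate-$A$-linear, $\bH$-linear anti-isometry $J$ of $E$ (sending $\la u,v\ra_E$ to $\la v,u\ra_E$), read off the explicit form and built from entrywise quaternion conjugation composed with the relevant coordinate permutation or complex conjugation, with $J^2=\pm1$. Given $x\in\oU(E)$, decompose $E$ into an orthogonal direct sum of $x$-stable, $\bH$-stable subspaces on which $x$ acts indecomposably, i.e. carry out the rational/Jordan normal form of the isometry $x$ inside the category of $\bH$-equivariant $(\epsilon,\tau)$-hermitian $A$-modules. Each such indecomposable summand is a quaternionic enhancement of a classical one (a companion block tensored with $\bH$, or a hyperbolic pair of such, or such a block carrying a $\bH$-hermitian pairing), and on each of them one writes down directly an $\bH$-linear anti-isometry inverting the restriction of $x$; assembling these gives $g$. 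Equivalently, when $x$ is conjugate by some $k\in\oU(E)$ to an element $t$ of a $J$-stable subgroup on which $J$ acts by $t\mapsto t^{-1}$, one may take $g=kJk^{-1}$: this is again conjugate-$A$-linear, $\bH$-linear and an anti-isometry, because $k$ is an $\bH$-linear isometry, and $gxg^{-1}=k(JtJ^{-1})k^{-1}=kt^{-1}k^{-1}=x^{-1}$.

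The main obstacle is precisely the $\bH$-linearity requirement. For the underlying non-quaternionic form $\la\,,\,\ra_{E_A}$ the isometry group $\oU(E_A)$ is in several cases strictly larger than $\oU(E)$ (for instance $\Sp(p,q)$ sits inside a real orthogonal group and $\oO^*(2n)$ inside a real symplectic group, and similarly over $\C$ and in the split cases $\GL_n(\bH)$ and $\GL_n(\C)$), so one cannot simply invoke \cite[Proposition~4.I.2]{MVW87} for $E_A$: the conjugating element it provides need not commute with the right $\bH$-action. Everything therefore comes down to carrying out the normal-form analysis of the isometry $x$ equivariantly for $\bH$ and, on each resulting explicit indecomposable block, checking by hand that the anti-isometry inverting $x$ can be chosen $\bH$-linear; this is where the concrete computations of Section~\ref{classification} are used. (Alternatively, in each case one verifies directly from the matrix description that the MVW conjugating element for $\oU(E_A)$ can be modified so as to commute with $\bH$, which amounts to the same check.)
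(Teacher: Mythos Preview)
Your plan is plausible but takes a route quite different from the paper's, and the step you label ``checking by hand'' is precisely where all the content lies.

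The paper never attempts a normal form for isometries. Instead, given $x\in\oU(E)$ it writes the Jordan decomposition $x=su$ in $\oU(E)$, and then \emph{enlarges the base algebra}: let $A_s\subset\End_{A,\bH}(E)$ be generated by $A$, $s$, $s^{-1}$. This is again a commutative involutive algebra (for the involution of \eqref{deftau}), $E$ becomes a quaternionic $(\epsilon,\tau)$-hermitian $A_s$-module $E_s$, and one checks $\breve{\oU}(E_s)=\{(g,\delta)\in\breve{\oU}(E):gsg^{-1}=s^{\delta}\}$. Thus the semisimple part is absorbed into the ground ring, and one only has to invert the unipotent $u\in\oU(E_s)$. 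For this the paper invokes Jacobson--Morozov to produce an $\sl_2(\R)$-action on $E_s$ with $\exp(\mathbf e)=u$, making $E_s$ a quaternionic $(\epsilon,\tau)$-hermitian $(\sl_2,A_s)$-module. The decisive lemma is then about \emph{modules}, not isometries: the twisted module $(E_s)_\tau$ is isomorphic to $E_s$. This is proved by the isotypic decomposition \eqref{decome}, which reduces it to Proposition~\ref{classify} applied to each multiplicity space $E_s(d)^\circ$. An isomorphism $E_s\cong(E_s)_\tau$ is exactly an element $(g,-1)\in\breve{\oU}(E_s)$ intertwining the two $\sl_2$-actions, so $g\mathbf e g^{-1}=-\mathbf e$, hence $gug^{-1}=u^{-1}$; and membership in $\breve{\oU}(E_s)$ already forces $gsg^{-1}=s^{-1}$.

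Your programme, by contrast, needs (i) a structure theorem for indecomposable $\bH$-equivariant isometries of each of the five explicit models, and (ii) an explicit $\bH$-linear anti-isometry inverting $x$ on every such block. Both are believable---the non-quaternionic versions are classical---but neither is carried out, and neither is short; the proposal stops exactly where the difficulty begins. What the paper's method buys is that it replaces the classification of isometries by the (already established) classification of modules, via the two reductions $A\rightsquigarrow A_s$ and Jacobson--Morozov; what your method would buy, if completed, is an argument free of $\sl_2$-theory and of the passage to the auxiliary algebra $A_s$.
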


\noindent {\bf Remarks}: (a) MVW-extensions are defined for p-adic non-quaternionic classical groups as well (cf. \cite{Sun2}). One may also define MVW-extensions for p-adic quaternionic general linear groups (cf. \cite[Lemma 3.1]{Ra}). But by a private communication whit D. Prasad, it seems that it is not possible to define MVW-extensions with desired properties for p-adic quaternionic unitary groups.

(b) MVW-extensions have many applications to representation theory of classical groups. For examples, they are used to prove Multiplicity One Theorems for non-quaternionic classical groups (\cite{AGRS,SZ}), and multiplicity preservation for local theta correspondences (\cite{LST}). They are also used to prove that local theta correspondence maps hermitian representations to hermitian representations (cf. \cite{Prz, Sun1}).

(c) The Lie algebra analog of the second statement of Theorem \ref{geo} also holds. Namely, for every $x$ in the Lie algebra of $\oU(E)$, there is an element $\breve g\in \breve{\oU}(E)\setminus \oU(E)$ such that
$\Ad_{\breve{g}} x=-x$.

(d) If $\epsilon=1$ and $(A,\tau)=(\R, 1_\R)$ or $(\C, 1_\C)$, then $ \breve{\oU}(E)=\oU(E)\times\{\pm 1\}$. Consequently, every irreducible Casselman-Wallach representation of $\Sp(p,q)$ or $\Sp_{2n}(\C)$ is self-cotragredient.

\vsp \noindent Acknowledgements: The authors thank C.-F. Nien for pointing out the reference \cite{Ra}. The work was supported by NSFC Grant 10931006.

\section{The classification of quaternionic $(\epsilon,\tau)$-hermitian $A$-modules}\label{classification}

We fix a $\R$-algebra embedding $\C\hookrightarrow \bH$ and thus view $\C$ as a subalgebra of $\bH$. Note that such an embedding is unique up to conjugations by $\bH^\times$, and that complex conjugation is consistent with the quaternion conjugation under the embedding. Write $\ii=\sqrt{-1}\in \C$, and fix an element $\jj\in \bH$ such that $\jj^2=-1$ and $\jj x \jj^{-1}=\bar x$ for all $x\in \C$. Denote by $\tr_{\bH/\R}:\bH\rightarrow \R$ the reduced trace of $\bH$.

Let $(A,\tau)$ be a commutative involutive algebra as in the introduction. In general, if $(E_i,\la\,,\,\ra_{E_i})$ is an $(\epsilon_i,\tau)$-hermitian $A)$-module ($\epsilon_i=\pm 1$, $i=1,2$), for every $\varphi\in \Hom_A(E_1,E_2)$, we define $\varphi^\tau\in \Hom_A(E_2,E_1)$ by requiring that
\begin{equation}\label{deftau}
   \la \varphi u,v\ra_{E_2}= \la u,\varphi^\tau v\ra_{E_1}, \quad u\in E_1, \,v\in E_2.
\end{equation}
Note that $(\varphi^\tau)^\tau=\epsilon_1 \epsilon_2 \varphi$.

Let $(E,\la\,,\,\ra_E)$ be a quaternionic $(\epsilon,\tau)$-hermitian $A$-module as in the introduction.

\subsection{The case of $(A,\tau)=(\R,1_\R)$}\label{c0}
Assume that $(A,\tau)=(\R,1_\R)$. Define a $\R$-bilinear map
\[
  \la\,,\,\ra_{E,\bH}: E\times E\rightarrow \bH
\]
by requiring that
\[
  \tr_{\bH/\R}(\la u,v \ra_{E,\bH}h)=\la u,vh \ra_E, \quad h\in \bH,\,u,v\in E.
\]
Then $ \la\,,\,\ra_{E,\bH}$ is a (non-degenerate) $(\epsilon,\bar{\,\,}_\bH)$-hermitian form on $E$, namely, it satisfies that
\[
  \la u, vh\ra_{E,\bH}=\la u,v\ra_{E,\bH}h, \quad \la u, v\ra_{E,\bH}=\epsilon \overline{\la v,u\ra}_{E,\bH},\quad h\in \bH, \,u,v\in E.
\]
On the other hand, $\la\,,\,\ra_{E,\bH}$ determines $\la\,,\,\ra_E$ by the formula
\[
  \la u,v \ra_E=\tr_{\bH/\R}(\la u,v \ra_{E,\bH}), \,u,v\in \bH.
\]
Therefore, there is a one-one correspondence between the set of quaternionic $(\epsilon,\tau)$-hermitian forms, and the set of $(\epsilon,\bar{\,\,}_\bH)$-hermitian forms, on $E$. From the well known classification of $(\epsilon,\bar{\,\,}_\bH)$-hermitian forms, we conclude that
\begin{prp}\label{cl1}
Assume that $(A,\tau)=(\R,1_\R)$. If $\epsilon=1$, then every quaternionic $(\epsilon,\tau)$-hermitian $A$-module is isomorphic to a unique $(\bH^{p+q}, \la \,,\,\ra_{\bH^{p,q}})$, where $p,q\geq 0$, and
\[
   \la (u_1,u_2,\cdots,u_{p+q}),\, (v_1, v_2,\cdots, v_{p+q})\ra_{\bH^{p,q}}=\tr_{\bH/\R}(\sum_{i\leq p} \bar{u_i}v_i-\sum_{j>p}\bar{u_j}v_j).
\]
If $\epsilon=-1$, then every quaternionic $(\epsilon,\tau)$-hermitian $A$-module is isomorphic to a unique $(\bH^n, \la \,,\,\ra_{\bH^n,\jj})$, where $n\geq 0$, and
\[
   \la (u_1,u_2,\cdot,u_{n}),\, (v_1, v_2,\cdots, v_n)\ra_{\bH^n,\jj}=\tr_{\bH/\R}(\sum_{i=1}^n \bar{u_i}\jj v_i).
\]
\end{prp}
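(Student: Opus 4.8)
The plan is to build on the one-to-one correspondence already set up just above: when $(A,\tau)=(\R,1_\R)$, the reduced trace identifies quaternionic $(\epsilon,1_\R)$-hermitian forms on $E$ with $(\epsilon,\bar{\,\,}_\bH)$-hermitian forms $\langle\,,\,\rangle_{E,\bH}$ on $E$. The first thing I would record is that this correspondence is \emph{natural} in $E$: an $A$-$\bH$-bimodule automorphism $g$ of $E$ --- in particular a right $\bH$-linear map --- preserves $\langle\,,\,\rangle_E$ if and only if it preserves $\langle\,,\,\rangle_{E,\bH}$. Indeed, for such a $g$ one has $\tr_{\bH/\R}(\langle gu,gv\rangle_{E,\bH}\,h)=\langle gu,g(vh)\rangle_E$ and $\tr_{\bH/\R}(\langle u,v\rangle_{E,\bH}\,h)=\langle u,vh\rangle_E$ for every $h\in\bH$, and the $\R$-bilinear pairing $(a,b)\mapsto\tr_{\bH/\R}(ab)$ on $\bH$ is non-degenerate; hence $g$ preserving $\langle\,,\,\rangle_E$ forces $\langle gu,gv\rangle_{E,\bH}=\langle u,v\rangle_{E,\bH}$ for all $u,v$, and conversely. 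So isomorphism classes of quaternionic $(\epsilon,1_\R)$-hermitian $A$-modules coincide with isometry classes of $(\epsilon,\bar{\,\,}_\bH)$-hermitian spaces over $\bH$, and the statement reduces to the classical classification of the latter, as invoked in the line preceding it.

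For $\epsilon=1$ I would quote the quaternionic Sylvester law of inertia. A non-degenerate $(1,\bar{\,\,}_\bH)$-hermitian form has an orthogonal $\bH$-basis $e_1,\dots,e_m$; each Gram entry $\langle e_i,e_i\rangle_{E,\bH}$ is a nonzero real number, being fixed by conjugation, and replacing $e_i$ by $e_ic$ rescales it by $c\bar c>0$, so it can be normalized to $\pm1$. The number $p$ of $+1$'s and the number $q$ of $-1$'s are invariants of $E$: the real symmetric form $\langle\,,\,\rangle_E=\tr_{\bH/\R}\langle\,,\,\rangle_{E,\bH}$ is positive definite on the $\R$-span of an $e_i$ with entry $+1$ and negative definite on that of an $e_i$ with entry $-1$, hence has signature $(4p,4q)$. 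This yields the normal form $(\bH^{p+q},\langle\,,\,\rangle_{\bH^{p,q}})$ with $(p,q)$ uniquely determined. For $\epsilon=-1$ I would again diagonalize, but now each Gram entry $a=\langle e_i,e_i\rangle_{E,\bH}$ is a nonzero purely imaginary quaternion ($a=-\bar a$). Replacing $e_i$ by $e_ic$ turns $a$ into $\bar c a c$; writing $c=\rho w$ with $\rho=\sqrt{c\bar c}>0$ and $w$ a unit quaternion, one has $\bar c a c=\rho^2\,w^{-1}aw$, a positive multiple of a conjugate of $a$, so as $c$ ranges over $\bH^\times$ the element $\bar c a c$ sweeps out all nonzero purely imaginary quaternions. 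Hence every Gram entry can be brought to $\jj$, the form becomes $(u,v)\mapsto\sum_i\bar u_i\jj v_i$ on $\bH^n$, and the only invariant is $n=\dim_\bH E$; this is the normal form $(\bH^n,\langle\,,\,\rangle_{\bH^n,\jj})$.

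Finally I would match these against the two forms written in the statement: by the defining relation $\langle u,v\rangle_E=\tr_{\bH/\R}\langle u,v\rangle_{E,\bH}$, the forms $\langle\,,\,\rangle_{\bH^{p,q}}$ and $\langle\,,\,\rangle_{\bH^n,\jj}$ are precisely the reduced traces of the standard $\bH$-valued hermitian, resp.\ skew-hermitian, forms, so they are genuine quaternionic $(\epsilon,1_\R)$-hermitian forms, and by the reduction above every quaternionic $(\epsilon,1_\R)$-hermitian $A$-module is isomorphic to exactly one of them. I expect the only genuinely non-formal inputs to be the inertia invariance when $\epsilon=1$ and the transitivity statement ``$\bar c a c$ exhausts the nonzero imaginary quaternions'' when $\epsilon=-1$; both are classical and elementary, so the write-up should simply cite the standard classification of $(\epsilon,\bar{\,\,}_\bH)$-hermitian forms over the quaternion division algebra rather than reprove it. The main obstacle is therefore essentially bookkeeping: keeping the passage between $\R$-valued and $\bH$-valued forms functorial throughout, which is exactly what the naturality remark in the first paragraph secures.
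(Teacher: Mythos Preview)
Your proposal is correct and follows the same approach as the paper: reduce via the reduced-trace correspondence to the classical classification of $(\epsilon,\bar{\,\,}_\bH)$-hermitian forms over $\bH$, then invoke that classification. The paper's own argument is in fact shorter than yours---it sets up the bijection and then simply writes ``From the well known classification of $(\epsilon,\bar{\,\,}_\bH)$-hermitian forms, we conclude that\ldots''---whereas you have additionally sketched the Sylvester inertia and the transitivity on purely imaginary quaternions that underlie that classification; this extra detail is fine but not required.
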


\subsection{The case of $(A,\tau)=(\C, 1_\C)$}\label{c1}

Assume that $(A,\tau)=(\C,1_\C)$. View $\bH$ as a $\C$-$\bH$-bimodule. It is a quaternionic $(-1,1_\C)$-hermitian $\C$-module under the form
\begin{equation}\label{csform}
   \begin{array}{rcl}
     \la\,,\,\ra_{\bH,-1}: \bH\times \bH& \rightarrow & \C,\\
      (a+b \jj, c+d\jj)& \mapsto & ad-bc, \quad a,b,c,d\in \C.
  \end{array}
\end{equation}

Put
\[
  F:=\Hom_{\C,\bH}(\bH, E),
\]
which is clearly a $\C$-vector space. It is a $(-\epsilon,1_\C)$-hermitian $\C$-module under the form
\[
  \la \varphi,\psi\ra_F:=\psi^\tau\circ\varphi\in \End_{\C,\bH}(\bH)=\C.
\]
On the other hand, $(E,\la\,,\,\ra_E)$ is determined by the $(-\epsilon,1_\C)$-hermitian $\C$-module $(F,\la\,,\,\ra_F)$ since
\[
  E=F\otimes_\C \bH
\]
as a $\C$-$\bH$-bimodule, and
\[
  \la \,,\,\ra_E=\la \,,\,\ra_F\,\la\, ,\, \ra_{\bH,-1}.
\]
Therefore, there is a one-one correspondence between the set of isomorphism classes of quaternionic $(\epsilon,1_\C)$-hermitian $\C$-module, and the set of isomorphism classes of $(-\epsilon,1_\C)$-hermitian $\C$-module. By the classification of $(-\epsilon,1_\C)$-hermitian $\C$-modules (that is, complex symplectic spaces or complex orthogonal spaces), we have
\begin{prpp}\label{cl2}
Assume that $(A,\tau)=(\C,1)$. If $\epsilon=1$, then every quaternionic $(\epsilon,\tau)$-hermitian $A$-module is isomorphic to a unique
\[
  (\C^{2n}\otimes_\C \bH, \,\la \,,\,\ra_{\C^{2n},-1}\otimes \la \,,\,\ra_{\bH,-1}),
\]
where $n\geq 0$,  and
\[
         \la (u_1,u_2,\cdots,u_{2n}),\, (v_1, v_2,\cdots, v_{2n})\ra_{\C^{2n},-1}=\sum_{i=1}^n (u_i v_{n+i}-u_{n+i}v_i).
\]
If $\epsilon=-1$, then every quaternionic $(\epsilon,\tau)$-hermitian $A$-module is isomorphic to a unique
\[
  (\C^{n}\otimes_\C \bH, \,\la \,,\,\ra_{\C^{n},1}\otimes \la \,,\,\ra_{\bH,-1}),
\]
where $n\geq 0$,  and
\[
         \la (u_1,u_2,\cdots,u_{n}),\, (v_1, v_2,\cdots, v_{n})\ra_{\C^{n},1}=\sum_{i=1}^n u_i v_i.
\]
\end{prpp}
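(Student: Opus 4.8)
The plan is to take the reduction set up immediately before the statement at face value and thereby reduce the classification to the classical complex case. The assignment $E\mapsto F:=\Hom_{\C,\bH}(\bH,E)$ attaches to each quaternionic $(\epsilon,1_\C)$-hermitian $\C$-module $E$ a $(-\epsilon,1_\C)$-hermitian $\C$-module $F$. First I would verify that this assignment is an equivalence: that $E$ is recovered as $F\otimes_\C\bH$ carrying the form $\la\,,\,\ra_F\otimes\la\,,\,\ra_{\bH,-1}$, and that an isometry $E\cong E'$ corresponds to a $\C$-linear isometry $F\cong F'$ and conversely. Granting this, isomorphism classes of the two kinds of modules are in bijection, and the proposition reduces to the classical classification of $(-\epsilon,1_\C)$-hermitian $\C$-modules.

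The core of the argument is the bimodule decomposition $E\cong F\otimes_\C\bH$. Since $\C\otimes_\R\bH\cong M_2(\C)$, a $\C$-$\bH$-bimodule (with $\R$ acting centrally) is a module over this matrix algebra, whose unique simple module is $\bH$ with $\End_{\C,\bH}(\bH)=\C$; hence every such $E$ is a direct sum of copies of $\bH$, and the evaluation map $F\otimes_\C\bH\to E$, $\varphi\otimes h\mapsto\varphi(h)$, is an isomorphism of $\C$-$\bH$-bimodules, functorial in $E$. I would then check the factorization of forms by a short adjoint computation: writing $\la\varphi,\psi\ra_F=\psi^\tau\circ\varphi\in\End_{\C,\bH}(\bH)=\C$ and using $\la\varphi u,v\ra_E=\la u,\varphi^\tau v\ra_{\bH,-1}$ together with the identity $(\varphi^\tau)^\tau=\epsilon_1\epsilon_2\varphi$ recalled above, one finds that $\la\varphi(a),\psi(b)\ra_E$ equals $\la\varphi,\psi\ra_F\,\la a,b\ra_{\bH,-1}$ up to a nonzero scalar, and that $\la\,,\,\ra_F$ is indeed $(-\epsilon,1_\C)$-hermitian. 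Such a scalar is immaterial for the classification, since over the algebraically closed field $\C$ it can be absorbed by rescaling a basis of $F$.

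With the equivalence in place, I would finish by invoking the classical classification. For $\epsilon=1$, $F$ is a nondegenerate alternating $\C$-space, so $\dim_\C F=2n$ is even and $F$ is isometric to $(\C^{2n},\la\,,\,\ra_{\C^{2n},-1})$; tensoring with $\bH$ gives the first model in the table, unique because complex symplectic spaces are determined by their dimension. For $\epsilon=-1$, $F$ is a nondegenerate symmetric $\C$-space, which over $\C$ is determined by its dimension $n$, so $F\cong(\C^{n},\la\,,\,\ra_{\C^{n},1})$ and $E\cong\C^{n}\otimes_\C\bH$. Transporting these normal forms through the equivalence yields exactly the two table entries, and the bijection on isomorphism classes gives the ``exactly one'' assertion.

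The hard part is not the final classification, which is standard, but the bookkeeping inside the equivalence: that the evaluation map is a bimodule isomorphism, that the induced form on $F$ is exactly $(-\epsilon,1_\C)$-hermitian with the correct sign (this is where the adjoint identity $(\varphi^\tau)^\tau=\epsilon_1\epsilon_2\varphi$ is essential), and that the form on $E$ factors as the asserted tensor product. Most of this is already spelled out in the discussion preceding the statement, and once it is in place the proposition is immediate.
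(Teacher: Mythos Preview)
Your proposal is correct and follows essentially the same route as the paper: the discussion preceding the proposition already sets up the assignment $E\mapsto F=\Hom_{\C,\bH}(\bH,E)$, the identification $E=F\otimes_\C\bH$, and the form factorization $\la\,,\,\ra_E=\la\,,\,\ra_F\,\la\,,\,\ra_{\bH,-1}$, so that the proposition is deduced immediately from the classification of complex symplectic and orthogonal spaces. Your added justification via $\C\otimes_\R\bH\cong M_2(\C)$ for why the evaluation map is an isomorphism, and your remark that any stray scalar in the form factorization is harmless over $\C$, are reasonable elaborations, though in fact the paper asserts the factorization on the nose (no scalar), which you could verify directly from the definition of $\varphi^\tau$.
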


\subsection{The case of $(A,\tau)=(\C,\bar{\,\,}_\C)$}\label{c2}
Assume that $(A,\tau)=(\C,\bar{\,\,}_\C)$. The argument is similar to that of the last section. View $\bH$ as a $\C$-$\bH$-bimodule as before. It is a quaternionic $(1,\bar{\,\,}_\C)$-hermitian $\C$-module under the form
\[
   \begin{array}{rcl}
     \la\,,\,\ra_{\bH,1}: \bH\times \bH& \rightarrow & \C,\\
      (a+b \jj, c+d\jj)& \mapsto & a\bar c+b\bar d, \quad a,b,c,d\in \C.
  \end{array}
\]

The space $F:=\Hom_{\C,\bH}(\bH, E)$ is an $(\epsilon,\bar{\,\,}_\C)$-hermitian $\C$-module under the form
\[
  \la \varphi,\psi\ra_F:=\psi^\tau\circ\varphi\in \End_{\C,\bH}(\bH)=\C.
\]
On the other hand, $(E,\la\,,\,\ra_E)$ is determined by $(F,\la\,,\,\ra_F)$ since
\[
  E=F\otimes_\C \bH
\]
as a $\C$-$\bH$-bimodule, and
\[
  \la \,,\,\ra_E=\la \,,\,\ra_F\,\la\, ,\, \ra_{\bH,1}.
\]
Therefore, by the classification of $(\epsilon,\bar{\,\,}_\C)$-hermitian $\C$-modules (that is, the usual hermitian or skew hermitian spaces), we have
\begin{prpp}\label{cl3}
Assume that $(A,\tau)=(\C,\bar{\,\,}_\C)$. If $\epsilon=1$, then every quaternionic $(\epsilon,\tau)$-hermitian $A$-module is isomorphic to a unique
\[
  (\C^{p+q}\otimes_\C \bH,\, \la \,,\,\ra_{\C^{p,q},1}\otimes \la \,,\,\ra_{\bH,1}),
\]
where $p,q\geq 0$, and
\[
         \la (u_1,u_2,\cdots,u_{p+q}),\, (v_1, v_2,\cdots, v_{p+q})\ra_{\C^{p,q},1}=\sum_{i\leq p} u_i\bar{v_i}-\sum_{j>p}u_j \bar{v_j}.
\]
If $\epsilon=-1$, then every quaternionic $(\epsilon,\tau)$-hermitian $A$-module is isomorphic to a unique
\[
  (\C^{p+q}\otimes_\C \bH,\, \la \,,\,\ra_{\C^{p,q},-1}\otimes \la \,,\,\ra_{\bH,1}),
\]
where $p,q\geq 0$, and
\[
         \la (u_1,u_2,\cdots,u_{p+q}),\, (v_1, v_2,\cdots, v_{p+q})\ra_{\C^{p,q},-1}=\sum_{i\leq p} \ii\, u_i\bar{v_i}-\sum_{j>p}\ii\, u_j \bar{v_j}.
\]
\end{prpp}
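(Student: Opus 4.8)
The plan is to carry out in detail the reduction displayed just before the statement and then quote the classification of ordinary (skew-)hermitian complex spaces. The argument parallels the case $(\C,1_\C)$ of the previous subsection; the only substantive changes are that the $A$-involution is now complex conjugation and that the auxiliary form on $\bH$ is the $(1,\bar{\,\,}_\C)$-form rather than a symplectic one, so the bookkeeping of $\epsilon$ differs.

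First I would justify the three structural assertions in the preamble. The identification $\End_{\C,\bH}(\bH)=\C$ comes from the fact that a left-$\C$-linear, right-$\bH$-linear endomorphism $\Phi$ of $\bH$ satisfies $\Phi(x)=\Phi(1)x$ by right $\bH$-linearity, while left $\C$-linearity forces $\Phi(1)$ to commute with every element of $\C$; hence $\Phi(1)\in\C$ and $\Phi$ is a scalar. This makes $\la\varphi,\psi\ra_F=\psi^\tau\circ\varphi$ a well-defined $\C$-valued form. Its $\C$-sesquilinearity and the symmetry $\la\psi,\varphi\ra_F=\epsilon\overline{\la\varphi,\psi\ra_F}$ follow from the defining relation \eqref{deftau} of $\varphi^\tau$ together with the identity $(\varphi^\tau)^\tau=\epsilon_1\epsilon_2\varphi$, with $\bH$ taken $(1,\bar{\,\,}_\C)$-hermitian; non-degeneracy of $\la\,,\,\ra_F$ is inherited from that of $\la\,,\,\ra_E$.

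Next I would establish the bimodule isomorphism $E\cong F\otimes_\C\bH$ via the evaluation map $\varphi\otimes h\mapsto\varphi(h)$, checking that it is $\C$-balanced, a $\C$-$\bH$-bimodule homomorphism, and bijective. Bijectivity is a Morita statement: since $\C\otimes_\R\bH\cong M_2(\C)$ and $\bH$ is, up to isomorphism, its unique simple module, the functors $E\mapsto\Hom_{\C,\bH}(\bH,E)$ and $F\mapsto F\otimes_\C\bH$ are mutually inverse; alternatively one checks injectivity and matches $\R$-dimensions. I would then confirm the factorization $\la\,,\,\ra_E=\la\,,\,\ra_F\,\la\,,\,\ra_{\bH,1}$ on elementary tensors and compute the symmetry of the tensor form, finding that $E$ is $(\epsilon,\bar{\,\,}_\C)$-hermitian exactly when $F$ is, since the $\bH$-factor, being $(1,\bar{\,\,}_\C)$-hermitian, contributes sign $+1$. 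This yields a bijection between isomorphism classes of quaternionic $(\epsilon,\bar{\,\,}_\C)$-hermitian $\C$-modules and isomorphism classes of $(\epsilon,\bar{\,\,}_\C)$-hermitian $\C$-modules.

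Finally I would invoke the classification of the latter: for $\epsilon=1$ they are hermitian spaces, classified by a signature $(p,q)$ with normal form $\la\,,\,\ra_{\C^{p,q},1}$; for $\epsilon=-1$ they are skew-hermitian, and multiplication by $\ii$ converts them to hermitian spaces, again classified by $(p,q)$ with normal form $\la\,,\,\ra_{\C^{p,q},-1}$. Tensoring each standard $F$ with $(\bH,\la\,,\,\ra_{\bH,1})$ produces precisely the modules in the statement, and uniqueness follows from uniqueness of the signature on the $F$-side. The main obstacle is the sign- and conjugation-bookkeeping in the symmetry computation of the third paragraph: one must confirm that, unlike in the $(\C,1_\C)$-case where the symplectic form on $\bH$ flips $\epsilon$ between $F$ and $E$, here the passage to $\tau=\bar{\,\,}_\C$ and the $(1,\bar{\,\,}_\C)$-form on $\bH$ preserves $\epsilon$. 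No deeper difficulty arises, since the division-algebra structure of $\bH$ forces the evaluation map to be an isomorphism.
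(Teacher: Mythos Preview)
Your proposal is correct and follows exactly the paper's approach: reduce the classification of quaternionic $(\epsilon,\bar{\,\,}_\C)$-hermitian $\C$-modules to that of ordinary $(\epsilon,\bar{\,\,}_\C)$-hermitian $\C$-modules via the correspondence $E\leftrightarrow F=\Hom_{\C,\bH}(\bH,E)$ with $E=F\otimes_\C\bH$ and $\la\,,\,\ra_E=\la\,,\,\ra_F\,\la\,,\,\ra_{\bH,1}$, then invoke the signature classification of complex (skew-)hermitian spaces. The paper simply asserts these structural facts by analogy with the preceding subsection, whereas you spell out the justifications (the identification $\End_{\C,\bH}(\bH)=\C$, the Morita-type bijectivity of the evaluation map, and the sign bookkeeping showing $\epsilon$ is preserved rather than flipped), but the underlying argument is the same.
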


\subsection{The case of $(A,\tau)=(\R\times \R, \tau_\R)$}\label{c3}

Assume that $(A,\tau)=(\R\times \R, \tau_\R)$. Write $e_1=(1,0)$ and $e_2=(0,1)$. Then we have a decomposition
\[
  E=E_1\oplus E_2
\]
of $E$ as a right $\bH$-vector space, where $E_1=e_1 E$ and $E_2=e_2 E$. It is easy to see that there is a unique non-degenerate $\R$-bilinear map
\[
   \la \,,\,\ra_{E_1,E_2}: E_1\times E_2\rightarrow \R
\]
satisfying
\[
   \la uh,v\ra_{E_1,E_2}=\la u,v\bar h\ra_{E_1,E_2},\quad u\in E_1, \,v\in E_2, \, h\in \bH
\]
such that
\[
  \la u_1+u_2, v_1+v_2  \ra_E=( \la u_1,v_2\ra_{E_1,E_2},\, \epsilon \la v_1, u_2\ra_{E_1,E_2}), \quad u_1,v_1
  \in E_1, \, u_2,v_2\in E_2.
\]
Note that the right $\bH$-vector space $E_2$ is determined by  the right $\bH$-vector space $E_1$ together with the paring $\la \,,\,\ra_{E_1,E_2}$. Namely, $E_2=\Hom_\R(E_1,\R)$ as right $\bH$-vector spaces. Therefore, we have
\begin{prpp}\label{cl4}
Assume that $(A,\tau)=(\R\times \R, \tau_\R)$. Then every quaternionic $(\epsilon,\tau)$-hermitian $A$-module is isomorphic to a unique
\[
  (\bH^n\oplus \bH^n,\, \la \,,\,\ra_{\bH^{2n},\R, \epsilon}),
\]
where $n\geq 0$, and
\[
         \la (u_1,u_2,\cdots,u_{2n}),\, (v_1, v_2,\cdots, v_{2n})\ra_{\bH^{2n},\R, \epsilon}=(\tr_{\bH/\R}(\sum_{i=1}^n \bar{u_i}v_{n+i}),\,\epsilon \tr_{\bH/\R}(\sum_{i=1}^n \bar{v_i}u_{n+i})).
\]
\end{prpp}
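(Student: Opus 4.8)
The plan is to carry out rigorously the reduction sketched in the paragraph preceding the statement, and then to invoke the classification of right $\bH$-vector spaces.

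First I would exploit the primitive idempotents $e_1=(1,0)$ and $e_2=(0,1)$ of $A$. Because the $A$-action and the right $\bH$-action on $E$ commute, the decomposition $1=e_1+e_2$ yields $E=E_1\oplus E_2$ with each $E_i=e_iE$ a right $\bH$-vector space, finite-dimensional since $E$ is finitely generated. From the first relation of \eqref{herme1} and the identity $\la u,av\ra_E=a^\tau\la u,v\ra_E$ (a formal consequence of \eqref{herme1}), together with $e_i^\tau=e_{3-i}$ and $e_1e_2=0$, one gets $\la u,v\ra_E=e_ie_i^\tau\la u,v\ra_E=0$ whenever $u,v$ lie in the same summand; thus $E_1$ and $E_2$ are totally isotropic. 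Hence the restriction of $\la\,,\,\ra_E$ to $E_1\times E_2$ takes values in $e_1A\cong\R$, and I define $\la\,,\,\ra_{E_1,E_2}\colon E_1\times E_2\to\R$ to be the $e_1$-component of that restriction. Non-degeneracy of $\la\,,\,\ra_E$ forces $\la\,,\,\ra_{E_1,E_2}$ to be non-degenerate, \eqref{herme2} restricts to $\la uh,v\ra_{E_1,E_2}=\la u,v\bar h\ra_{E_1,E_2}$, and expanding $\la u_1+u_2,v_1+v_2\ra_E$ by bilinearity and Hermitian symmetry recovers the displayed formula $\la u_1+u_2,v_1+v_2\ra_E=(\la u_1,v_2\ra_{E_1,E_2},\ \epsilon\la v_1,u_2\ra_{E_1,E_2})$. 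Uniqueness of $\la\,,\,\ra_{E_1,E_2}$ is immediate, being read off directly from $\la\,,\,\ra_E$.

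Next I would show that the whole datum $(E,\la\,,\,\ra_E)$ is reconstructed from the single right $\bH$-vector space $E_1$. Equipping $\Hom_\R(E_1,\R)$ with the right $\bH$-action $(f\cdot h)(u):=f(u\bar h)$, the assignment $v\mapsto\la\,\cdot\,, v\ra_{E_1,E_2}$ is an isomorphism $E_2\cong\Hom_\R(E_1,\R)$ of right $\bH$-vector spaces: it is injective by non-degeneracy, hence bijective since a non-degenerate $\R$-bilinear pairing between finite-dimensional spaces identifies one with the full dual of the other. Transporting the structure through this isomorphism, $(E,\la\,,\,\ra_E)$ becomes $E_1\oplus\Hom_\R(E_1,\R)$ with $e_1,e_2$ acting as the two projections and the form built from the canonical evaluation pairing, weighted by $\epsilon$ in the second coordinate. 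Therefore the isomorphism classes of quaternionic $(\epsilon,\tau)$-hermitian $A$-modules are in bijection with the isomorphism classes of finite-dimensional right $\bH$-vector spaces.

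Finally, since $\bH$ is a division algebra, every finite-dimensional right $\bH$-vector space is isomorphic to $\bH^n$ for a unique $n\ge0$, so taking $E_1=\bH^n$ gives exactly one module in each isomorphism class. It remains to match this with the stated model: one checks that $\la u,w\ra:=\tr_{\bH/\R}(\sum_{i=1}^n\bar u_iw_i)$ is a non-degenerate $\R$-bilinear pairing on $\bH^n\times\bH^n$ with $\la uh,w\ra=\la u,w\bar h\ra$ (using $\tr_{\bH/\R}(xy)=\tr_{\bH/\R}(yx)$ and $\overline{uh}=\bar h\,\bar u$), and that assembling it via the formula of the previous paragraph produces precisely the form $\la\,,\,\ra_{\bH^{2n},\R,\epsilon}$ displayed in the statement. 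I do not expect a genuine obstacle here: the whole content is the bookkeeping of three mutually commuting structures — the $A$-module, the right $\bH$-module, and the $\R$-bilinear form — and the only delicate point is keeping the quaternion conjugations and the sign $\epsilon$ in their correct positions, especially in defining the right $\bH$-action on $\Hom_\R(E_1,\R)$ and in matching the reconstructed form with the explicit one.
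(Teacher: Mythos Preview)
Your proposal is correct and follows essentially the same approach as the paper: decompose $E$ via the idempotents $e_1,e_2$, extract the $\R$-valued pairing $\la\,,\,\ra_{E_1,E_2}$, observe that it identifies $E_2$ with $\Hom_\R(E_1,\R)$ as right $\bH$-spaces, and then classify by $\dim_\bH E_1$. You have simply filled in the details that the paper leaves to the reader, including the isotropy computation $\la e_iu,e_iv\ra_E=e_ie_i^\tau\la u,v\ra_E=0$, the explicit right $\bH$-action on the dual, and the verification that the trace pairing on $\bH^n$ reproduces the stated form.
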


\subsection{The case of $(A,\tau)=(\C\times \C, \tau_\C)$}\label{c4}

Note that every finitely generated $\C$-$\bH$-bimodule is isomorphic to some $\bH^n$. Now the argument is the same as in last section. We skip the details and record the result:
\begin{prpp}\label{cl5}
Assume that $(A,\tau)=(\C\times \C, \tau_\C)$. Then every quaternionic $(\epsilon,\tau)$-hermitian $A$-module is isomorphic to a unique
\[
  (\bH^n\oplus \bH^n,\, \la \,,\,\ra_{\bH^{2n},\C, \epsilon}),
\]
where $n\geq 0$, and
\[
         \la (u_1,u_2,\cdots,u_{2n}),\, (v_1, v_2,\cdots, v_{2n})\ra_{\bH^{2n},\C, \epsilon}=(\sum_{i=1}^n \la u_i, v_{n+i}\ra_{\bH,-1},\,\epsilon \sum_{i=1}^n \la v_i, u_{n+i}\ra_{\bH,-1}).
\]
Here the complex symplectic form $\la\, ,\,\ra_{\bH,-1}:\bH\times \bH\rightarrow \C$ is the one in \eqref{csform}.

\end{prpp}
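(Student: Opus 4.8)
The plan is to carry over the argument of Section~\ref{c3} essentially verbatim, with $\R$ replaced by $\C$. Let $e_1=(1,0)$ and $e_2=(0,1)$ be the primitive idempotents of $A=\C\times\C$, and put $E_1=e_1E$, $E_2=e_2E$, so that $E=E_1\oplus E_2$ and each $E_i$ is a finitely generated $\C$-$\bH$-bimodule, in particular a right $\bH$-vector space. First I would show $\la E_i,E_i\ra_E=0$: for $u,v\in E_1$, \eqref{herme1} gives $\la u,v\ra_E=e_1\la u,v\ra_E\in e_1A$ and also $\la u,v\ra_E=\epsilon\la v,u\ra_E^{\tau_\C}\in\epsilon\,\tau_\C(e_1A)=e_2A$, hence $\la u,v\ra_E\in e_1A\cap e_2A=\{0\}$; the same applies to $E_2$. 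Therefore $\la\,,\,\ra_E$ lives in the two off-diagonal blocks; writing $\la u_1,v_2\ra_{E_1,E_2}\in\C\cong e_1A$ for the value $\la u_1,v_2\ra_E$ (with $u_1\in E_1$, $v_2\in E_2$), one gets
\[
  \la u_1+u_2,\,v_1+v_2\ra_E=\bigl(\la u_1,v_2\ra_{E_1,E_2},\ \epsilon\,\la v_1,u_2\ra_{E_1,E_2}\bigr),\qquad u_i,v_i\in E_i ,
\]
and by \eqref{herme2} the pairing $\la\,,\,\ra_{E_1,E_2}\colon E_1\times E_2\to\C$ satisfies $\la u_1h,v_2\ra_{E_1,E_2}=\la u_1,v_2\bar h\ra_{E_1,E_2}$. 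Non-degeneracy of $\la\,,\,\ra_E$ is equivalent to non-degeneracy of $\la\,,\,\ra_{E_1,E_2}$; in particular $\dim_\bH E_1=\dim_\bH E_2$.

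It then remains to identify the datum $(E_1,\la\,,\,\ra_{E_1,E_2})$ up to isomorphism. As in Section~\ref{c3}, non-degeneracy identifies $E_2$ with $\Hom_\C(E_1,\C)$ as a right $\bH$-vector space (the $\bH$-action carried over through the relation $\la u_1h,v_2\ra=\la u_1,v_2\bar h\ra$, i.e.\ twisted by $\bar{\phantom{a}}_\bH$, and the left $\C$-structures matched via $\tau_\C$), so $E$ is recovered from $(E_1,\la\,,\,\ra_{E_1,E_2})$. Since every finitely generated $\C$-$\bH$-bimodule is isomorphic to $\bH^n$ for a unique $n$ (the $\R$-algebra $\C\otimes_\R\bH$ being $\cong M_2(\C)$, whose finitely generated modules are just powers of $\C^2=\bH$), we get $E_1\cong\bH^n$, hence $E_2\cong\bH^n$, and up to isomorphism there is a single non-degenerate $\C$-valued $\bH$-balanced pairing between two copies of $\bH^n$, namely the one built from the complex symplectic form $\la\,,\,\ra_{\bH,-1}$ of \eqref{csform}. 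Unwinding all of this realizes $E$ as $(\bH^n\oplus\bH^n,\la\,,\,\ra_{\bH^{2n},\C,\epsilon})$ with the displayed form, and shows $n=\dim_\bH E_1$ is a complete invariant, which gives the uniqueness.

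The step that takes the most care — routine, and omitted by the author as well — is the rigidity used in the previous paragraph: that non-degeneracy pins down $E_2$ as $\Hom_\C(E_1,\C)$ and that the non-degenerate $\bH$-balanced $\C$-pairing between two copies of $\bH^n$ is unique up to isomorphism. The block-decomposition bookkeeping of the first paragraph is immediate.
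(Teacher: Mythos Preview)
Your proposal is correct and follows exactly the approach the paper indicates: it reproduces the idempotent decomposition argument of Section~\ref{c3} with $\R$ replaced by $\C$, together with the one new ingredient the paper singles out, namely that every finitely generated $\C$-$\bH$-bimodule is isomorphic to some $\bH^n$. The paper explicitly says ``the argument is the same as in last section'' and omits the details you have supplied.
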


We get Proposition \ref{classify} by combining Propositions \ref{cl1}-\ref{cl5}.

\section{Quaternionic $(\epsilon,\tau)$-hermitian $(\sl_2, A)$-modules}

By an $(\epsilon,\tau)$-hermitian $(\sl_2,A)$-module, we mean an $(\epsilon,\tau)$-hermitian $A$-module $E$, together with a Lie algebra action
\begin{equation}\label{actionsl}
  \sl_2(\R)\times E\rightarrow E, \quad x,v\mapsto xv,
\end{equation}
which is $\R$-linear on the first factor,  $A$-linear on the second
factor, and satisfies
\[
  \la xu,v\ra_E+\la u,xv\ra_E=0, \quad x\in \sl_2(\R),\,u,v\in E.
\]
If  $E$ is furthermore a quaternionic $(\epsilon,\tau)$-hermitian $A$-module, and the action \eqref{actionsl} is $\bH$-linear on the second factor as well, we say that $E$ is a quaternionic $(\epsilon,\tau)$-hermitian $(\sl_2,A)$-module.

For every positive integer $d$, fix an irreducible real $\sl_2(\R)$-module $F_d$ of dimension $d$, which is unique up to isomorphism. We also fix a non-zero $\sl_2(\R)$-invariant bilinear map
\[
  \la \,,\,\ra_{F_d}: F_d\times F_d\rightarrow \R,
\]
which is unique up to scalar multiplication. The form $\la \,,\,\ra_{F_d}$ is non-degenerate. It is symmetric if $d$ odd, and is skew symmetric if $d$ is even. The space
\[
  A\otimes_\R F_d
\]
is obviously an $((-1)^{d-1},\tau)$-hermitian $(\sl_2,A)$-module with the form
\[
  \la a\otimes u, b\otimes v\ra_{A\otimes_\R F_d}:=ab^\tau \la u,  v\ra_{F_d}
\]

Now let $E$ be a quaternionic $(\epsilon,\tau)$-hermitian $(\sl_2,A)$-module. Denote by $E(d)$ the sum of irreducible $\sl_2(\R)$-submodules of $E$ which are isomorphic to $F_d$. It is again a quaternionic $(\epsilon,\tau)$-hermitian $(\sl_2,A)$-module, and we have an orthogonal decomposition
\[
  E=\bigoplus_{d\geq 1} E(d).
\]
The space
\[
 E(d)^\circ:=\Hom_{\sl_2(\R)}(F_d, E(d))=\Hom_{\sl_2(\R), A}(A\otimes_\R F_d, E(d))
\]
is checked to be  a quaternionic $((-1)^{d-1}\epsilon,\tau)$-hermitian $A$-module under the form
\[
  \la \varphi, \psi\ra_{E(d)^\circ}:=\psi^\tau\circ\varphi\in \End_{\sl_2(\R),A}(A\otimes_\R F_d)=A.
\]

On the other hand, the quaternionic $(\epsilon,\tau)$-hermitian $(\sl_2,A)$-module $E(d)$ is determined by the quaternionic $((-1)^{d-1}\epsilon,\tau)$-hermitian $A$-module $E(d)^\circ$ since
\[
  E(d)=E(d)^\circ\otimes_\R F_d
\]
as a $\sl_2(\R)$-$A$-$\bH$-module, and
\[
   \la \,,\,\ra_E|_{E(d)\times E(d)}=\la\, , \,\ra_{E(d)^\circ}\otimes \la \,,\,\ra_{F_d}.
\]

In conclusion, we have
\begin{equation}\label{decome}
  E=\bigoplus_{d\geq 1} E(d)^\circ\otimes_\R F_d
\end{equation}
as a quaternionic $(\epsilon,\tau)$-hermitian $(\sl_2,A)$-module.

\section{Proof of Theorem \ref{geo}}

Let $E$ be an  $(\epsilon,\tau)$-hermitian $A$-module, or a quaternionic $(\epsilon,\tau)$-hermitian $A$-module, or an $(\epsilon,\tau)$-hermitian $(\sl_2,A)$-module, or a quaternionic $(\epsilon,\tau)$-hermitian $(\sl_2,A)$-module. We define $E_\tau$, which is a space with the same kinds of structures as that of $E$, as follows. In any case, $E_\tau=E$ as a real vector space, and when $E$ is quaternionic, $E_\tau=E$ as a right $\bH$-vector space. For every $v\in E$,
write $v_\tau:=v$, viewed as a vector in $E_\tau$. Then the scalar multiplication on $E_\tau$ is
\[
  a v_\tau:=(a^\tau v)_\tau, \quad a\in A, v\in E,
\]
and the hermitian form is
\[
  \la u_\tau, v_\tau\ra_{E_\tau}:=\la v,u\ra_E, \quad u,v \in E.
\]
When $E$ is equipped with a $\sl_2(\R)$-action, the $\sl_2(\R)$-action on $E_\tau$ is given by
\[
  \mathbf h v_\tau:=(\mathbf h v)_\tau, \quad \mathbf e
  v_\tau:=-(\mathbf e v)_\tau, \quad \mathbf f v_\tau:=-(\mathbf f
  v)_\tau, \quad v\in E,
\]
where
\[
  \mathbf h:= \left[
                   \begin{array}{cc} 1&0\\ 0&-1\\
                   \end{array}
  \right], \quad
  \mathbf e:= \left[
                   \begin{array}{cc} 0&1\\ 0&0\\
                   \end{array}
  \right],\quad
  \mathbf f:=\left[
                   \begin{array}{cc} 0&0\\ 1&0\\
                   \end{array}
  \right],
\]
which form a basis of the Lie algebra $\sl_2(\R)$.

\begin{lem}
If is $E$ be an  $(\epsilon,\tau)$-hermitian $A$-module, or a quaternionic $(\epsilon,\tau)$-hermitian $A$-module, or an $(\epsilon,\tau)$-hermitian $(\sl_2,A)$-module, or a quaternionic $(\epsilon,\tau)$-hermitian $(\sl_2,A)$-module. Then, $E_\tau$ is isomorphic to $E$.
\end{lem}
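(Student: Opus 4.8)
The plan is to reduce the statement to the irreducible building blocks and then exhibit explicit isomorphisms case by case, using the classification results of Section~\ref{classification} (and, in the $(\sl_2,A)$ case, the decomposition \eqref{decome}). First I would observe that the construction $E\mapsto E_\tau$ is compatible with orthogonal direct sums and with the product decomposition of $(A,\tau)$ into simple factors, so it suffices to treat the case where $(A,\tau)$ is one of the five simple commutative involutive algebras in \eqref{simplea}. Moreover, in the $(\sl_2,A)$-module case, the functor $E\mapsto E_\tau$ respects the isotypic decomposition $E=\bigoplus_d E(d)^\circ\otimes_\R F_d$: one checks that $(E(d)^\circ\otimes_\R F_d)_\tau\cong (E(d)^\circ)_\tau\otimes_\R F_d'$, where $F_d'$ is $F_d$ with the twisted $\sl_2(\R)$-action $\mathbf h,\mathbf e,\mathbf f\mapsto \mathbf h,-\mathbf e,-\mathbf f$, and that $F_d'\cong F_d$ as $\sl_2(\R)$-modules via the automorphism of $\sl_2(\R)$ given by conjugation by $\diag(1,-1)$ (or equivalently by $\mathbf e\leftrightarrow -\mathbf e$, $\mathbf f\leftrightarrow -\mathbf f$). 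This reduces everything to the statement for quaternionic $(\epsilon,\tau)$-hermitian $A$-modules (resp. $(\epsilon,\tau)$-hermitian $A$-modules) with $(A,\tau)$ simple.

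Next I would handle the remaining finite list of cases using the explicit models. For $(A,\tau)=(\R,1_\R)$, the involution is trivial, so $a v_\tau = (av)_\tau$ and $\la u_\tau,v_\tau\ra_{E_\tau}=\la v,u\ra_E=\epsilon\la u,v\ra_E$; thus $E_\tau$ is $E$ with its form multiplied by $\epsilon$, and for $\epsilon=1$ this is literally $E$, while for $\epsilon=-1$ one invokes the uniqueness in Proposition~\ref{cl1} (for the skew case over $\R$, and for the quaternionic skew form, the form is anyway determined up to scalar/uniquely by the classification, so $E_\tau\cong E$). For $(A,\tau)=(\C,1_\C)$ the same computation applies since $\tau$ is again the identity. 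For $(A,\tau)=(\C,\bar{\phantom{a}}_\C)$, the map $v\mapsto v_\tau$ is conjugate-linear; composing with any fixed conjugate-linear automorphism of the standard hermitian/skew-hermitian space (e.g. coordinatewise complex conjugation on $\C^{p+q}$) produces a genuine isomorphism $E\to E_\tau$, using that $\la\bar u,\bar v\ra=\overline{\la u,v\ra}$ for the standard forms and that this exchanges $\la u,v\ra$ and $\la v,u\ra$ up to the sign $\epsilon$ which is absorbed by uniqueness. For the two split cases $(A,\tau)=(\R\times\R,\tau_\R)$ and $(\C\times\C,\tau_\C)$, the involution swaps the two factors, so $E_\tau$ simply interchanges the roles of $E_1$ and $E_2$ in the decomposition $E=E_1\oplus E_2$; the pairing data $\la\,,\,\ra_{E_1,E_2}$ is carried to its transpose, and the normal form $\bH^n\oplus\bH^n$ of Propositions~\ref{cl4}--\ref{cl5} is manifestly symmetric under this swap, so $E_\tau\cong E$.

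Throughout, the quaternionic refinement adds essentially nothing new: in each case the auxiliary Morita-type equivalences established in Section~\ref{classification} (e.g. $E=F\otimes_\C\bH$ with $\la\,,\,\ra_E=\la\,,\,\ra_F\la\,,\,\ra_{\bH,\pm 1}$, and the trace construction for $(A,\tau)=(\R,1_\R)$) are visibly compatible with the $(-)_\tau$ operation, so $(E)_\tau$ corresponds to $(F)_\tau$, and one is reduced to the non-quaternionic $A$-module statement that has just been checked. I expect the main obstacle to be purely bookkeeping: one must be careful that the sign $\epsilon$ (and the sign $(-1)^{d-1}$ in the $\sl_2$ case) produced by $\la u_\tau,v_\tau\ra_{E_\tau}=\la v,u\ra_E=\epsilon\la u,v\ra_E$ does not change the isomorphism class, which is exactly guaranteed by the \emph{uniqueness} clauses in Propositions~\ref{cl1}--\ref{cl5} (the normal forms are already their own $\epsilon$-rescalings, since the classification has no free scaling parameter beyond the discrete invariants $p,q,n$). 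So the proof is a short case check rather than anything structural; the only genuine point requiring attention is verifying the $\sl_2(\R)$-module isomorphism $F_d'\cong F_d$ under the sign-twisted action, which follows from the outer-looking-but-inner automorphism of $\sl_2(\R)$ given by conjugation by $\diag(1,-1)\in\GL_2(\R)$.
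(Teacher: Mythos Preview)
Your proposal is correct and follows essentially the same approach as the paper. The paper's proof cites \cite{Sun2} for the two non-quaternionic cases, invokes Proposition~\ref{classify} directly for the quaternionic $(\epsilon,\tau)$-hermitian $A$-module case, and then handles the quaternionic $(\sl_2,A)$-module case via the decomposition \eqref{decome}, writing $E_\tau\cong\bigoplus (E(d)^\circ)_\tau\otimes_\R (F_d)_\tau\cong\bigoplus E(d)^\circ\otimes_\R F_d=E$; your argument simply unpacks the cited steps (in particular your $\diag(1,-1)$ observation is exactly what underlies $(F_d)_\tau\cong F_d$) and spells out the case check that the paper leaves implicit in the phrase ``follows easily from Proposition~\ref{classify}''.
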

\begin{proof}
The lemma is proved in \cite[Proposition 1.2 and 1.5]{Sun2} when  $E$ is an $(\epsilon,\tau)$-hermitian $A$-module, or a $(\epsilon,\tau)$-hermitian $(\sl_2,A)$-module.

Without lose of generality, assume that $(A,\tau)$ is simple. The lemma follows easily from Proposition \ref{classify} if $E$ is a  quaternionic $(\epsilon,\tau)$-hermitian $A$-module.

Now assume that $E$ is a  quaternionic $(\epsilon,\tau)$-hermitian $(\sl_2,A)$-module. Write
\[
   E=\bigoplus_{d\geq 1} E(d)^\circ\otimes_\R F_d
\]
as in \eqref{decome}. Recall that $E(d)^\circ$ is a quaternionic $((-1)^{d-1}\epsilon,\tau)$-hermitian $A$-module, and $F_d$ is a $((-1)^{d-1}, 1_\R)$-hermitian $(\sl_2,\R)$-module. Therefore
\begin{eqnarray*}
% \nonumber to remove numbering (before each equation)
  E_\tau &\cong& (\bigoplus_{d\geq 1} E(d)^\circ\otimes_\R F_d)_\tau\\
   &\cong& \bigoplus_{d\geq 1} (E(d)^\circ)_\tau\otimes_\R (F_d)_\tau \\
   &\cong& \bigoplus_{d\geq 1} E(d)^\circ\otimes_\R F_d   \\
   &=& E.
\end{eqnarray*}

\end{proof}

Now we are ready to prove Theorem \ref{geo}. Let $E$ be a quaternionic $(\epsilon,\tau)$-hermitian $A$-module. Note that the isomorphism $E\cong E_\tau$ amounts to saying that there is an element of the form $(g,-1)$ in $\breve{\oU}(E)$, that is,  $\oU(E)$ has index two in $\breve{\oU}(E)$. This proves the first statement of Theorem \ref{geo}.

Let $x\in \oU(E)$. Write $x=su$ for the Jordan decomposition of $x$, where $s\in \oU(E)$ is semisimple, and $u\in \oU(E)$ is unipotent. Denote by $A_s$ the subalgebra of $\End_{A,\bH}(E)$ generated by $s$, $s^{-1}$ and scalar multiplications by $A$. Recall that as a special case of \eqref{deftau}, we have defined an involution $\tau$ on $\End_{A}(E)$. The algebra $A_s$ is $\tau$-stable, and the pair $(A_s,\tau)$ is a commutative involutive algebra. Write $E_s:=E$, viewed as an $A_s$-$\bH$-bimodule, equipped with the $\R$-bilinear map
\[
   \la\,,\,\ra_{E_s}: E_s \times E_s \rightarrow A_s
\]
such that
\[
    \tr_{A_s/\R}(a\la u, v\ra_{E_s})= \tr_{A/\R}(\la a u, v\ra_{E}), \quad u,v\in E, \,\,a\in A_s.
\]
Then $E_s$ becomes a quaternion $(\epsilon,\tau)$-hermitian $A_s$-modules, and we have that
\[
  \breve{\oU}(E_s)=\{(g,\delta)\in \breve{\oU}(E)\mid gsg^{-1}=s^\delta\}.
\]

Use Jacobson-Morozov Theorem, we choose an action of $\sl_2(\R)$ on $E_s$ such that it makes $E_s$ a quaternionic
$(\epsilon,\tau)$-hermitian $(\sl_2,A_s)$-module, and that the exponential of
the action of $\mathbf e$ coincides with $u$. Since $(E_s)_\tau$ is isomorphic to $E_s$ as a quaternionic
$(\epsilon,\tau)$-hermitian $(\sl_2,A_s)$-module, there is an element $\breve g=(g,-1)\in \breve{\oU}(E_s)$ such that $gug^{-1}=u^{-1}$. This proves  Theorem \ref{geo} since $\breve g\in \breve{\oU}(E)\setminus \oU(E)$ and  $\breve{g}x\breve{g}^{-1}=x^{-1}$.

\end{document}